\newtheorem{theorem}{Theorem}[section]
\newtheorem{lemma}[theorem]{Lemma}
\newtheorem{proposition}[theorem]{Proposition}
\newtheorem{corollary}[theorem]{Corollary}
\theoremstyle{definition}
\theoremstyle{remark}
\newtheorem{remark}[theorem]{Remark}
\numberwithin{equation}{section}
\begin{document}

\setcounter{page}{1}

\title[$A$-numerical radius and product of semi-Hilbertian operators]
{$A$-numerical radius and product of semi-Hilbertian operators}

\author[A. Zamani]
{Ali Zamani}

\address{Department of Mathematics, Farhangian University, Tehran, Iran}
\email{zamani.ali85@yahoo.com}

\subjclass[2010]{Primary 47A05; Secondary 46C05, 47B65, 47A12.}

\keywords{Positive operator, semi-inner product, $A$-numerical radius.}

\begin{abstract}
Let $A$ be a positive bounded operator on a Hilbert space
$\big(\mathcal{H}, \langle \cdot, \cdot\rangle \big)$.
The semi-inner product ${\langle x, y\rangle}_A := \langle Ax, y\rangle$, $x, y\in\mathcal{H},$
induces a seminorm ${\|\cdot\|}_A$ on $\mathcal{H}$.
Let $w_A(T)$ denote the $A$-numerical radius of an operator $T$ in the semi-Hilbertian space
$\big(\mathcal{H}, {\|\cdot\|}_A\big)$. In this paper, for any semi-Hilbertian operators $T$ and $S$,
we show that $w_A(TR) = w_A(SR)$ for all ($A$-rank one) semi-Hilbertian operator $R$
if and only if $A^{1/2}T = \lambda A^{1/2}S$ for some complex unit $\lambda$.
From this result we derive a number of consequences.
\end{abstract} \maketitle
\section{Introduction}
Let $\mathcal{H}$ and $\mathcal{K}$ be complex Hilbert spaces with inner product $\langle\cdot, \cdot\rangle$.
By $\mathbb{B}(\mathcal{H}, \mathcal{K})$ we denote the space of all bounded linear operators from $\mathcal{H}$ to $\mathcal{K}$,
and we abbreviate $\mathbb{B}(\mathcal{H}) = \mathbb{B}(\mathcal{H}, \mathcal{H})$.
For every $T\in\mathbb{B}(\mathcal{H}, \mathcal{K})$ its range is denoted by $\mathcal{R}(T)$,
its null space by $\mathcal{N}(T)$, and its adjoint by $T^*$.
If $\mathcal{M}$ is a linear subspace of $\mathcal{H}$, then $\overline{\mathcal{M}}$ stands for its closure in the
norm topology of $\mathcal{H}$. Given a closed subspace $\mathcal{M}$ of $\mathcal{H}$, $P_{\mathcal{M}}$
denotes the orthogonal projection onto $\mathcal{M}$.
Throughout this paper, we assume that $A\in\mathbb{B}(\mathcal{H})$
is a positive operator (i.e., $\langle Ax, x\rangle \geq 0$ for all $x\in\mathcal{H}$).
Such an $A$ induces a semi-inner product on $\mathcal{H}$ defined by ${\langle x, y\rangle}_A = \langle Ax, y\rangle$
for all $x, y \in\mathcal{H}$. We denote by ${\|\cdot\|}_A$ the seminorm induced by ${\langle \cdot, \cdot\rangle}_A$.
Observe that ${\|x\|}_A = 0$ if and only if $x\in\mathcal{N}(A)$. Then ${\|\cdot\|}_A$ is a norm if and only if $A$ is one-to-one,
and the seminormed space $(\mathcal{H}, {\|\cdot\|}_A)$ is a complete space if and only if $\mathcal{R}(A)$ is closed in $\mathcal{H}$.
For $T\in\mathbb{B}(\mathcal{H})$, the quantity of $A$-operator seminorm of $T$ is defined by
${\|T\|}_A = \sup\big\{{\|Tx\|}_A: \,\,x\in \overline{\mathcal{R}(A)},\, {\|x\|}_A =1\big\}$.
Notice that it may happen that ${\|T\|}_A = + \infty$
for some $T\in\mathbb{B}(\mathcal{H})$.
For example, let $A$ be the diagonal operator on
the Hilbert space $\ell^2$ given by $Ae_n = \frac{e_n}{n!}$,
where $\{e_n\}$ denotes the canonical basis of $\ell^2$
and consider the left shift operator $T \in \mathbb{B}(\ell^2)$.
From now on we will denote
$\mathbb{B}^{A}(\mathcal{H}) :=\big\{T\in \mathbb{B}(\mathcal{H}): \,\, {\|T\|}_A < \infty\big\}$.
For $T\in \mathbb{B}(\mathcal{H})$, an operator $R\in \mathbb{B}(\mathcal{H})$
is called an $A$-adjoint operator of $T$ if for every $x, y\in \mathcal{H}$,
we have ${\langle Tx, y\rangle}_A = {\langle x, Ry\rangle}_A$, that is, $AR = T^*A$.
Generally, the existence of an $A$-adjoint operator is not guaranteed.
The set of all operators that admit $A$-adjoints is denoted by $\mathbb{B}_{A}(\mathcal{H})$.
If $T\in\mathbb{B}_{A}(\mathcal{H})$, then the ``reduced" solution of the equation $AX = T^*A$ is a distinguished $A$-adjoint
operator of $T$, which is denoted by $T^{\sharp_A}$. Notice that if $T\in\mathbb{B}_{A}(\mathcal{H})$,
then $T^{\sharp_A}\in\mathbb{B}_{A}(\mathcal{H})$ and
$(T^{\sharp_A})^{\sharp_A} = P_{\overline{\mathcal{R}(A)}}TP_{\overline{\mathcal{R}(A)}}$.
The set of all operators admitting $A^{1/2}$-adjoints is denoted by $\mathbb{B}_{A^{1/2}}(\mathcal{H})$.
It can be verified that
\begin{align*}
\mathbb{B}_{A^{1/2}}(\mathcal{H}) = \big\{T\in \mathbb{B}(\mathcal{H}): \,\, \exists c>0;
\,\,{\|Tx\|}_{A} < c{\|x\|}_{A}, \,\, \forall x\in \mathcal{H}\big\}.
\end{align*}
Note that $\mathbb{B}_{A}(\mathcal{H})$ and $\mathbb{B}_{A^{1/2}}(\mathcal{H})$ are two subalgebras of
$\mathbb{B}(\mathcal{H})$ which are neither closed nor dense in $\mathbb{B}(\mathcal{H})$.
Moreover, the inclusions
$\mathbb{B}_{A}(\mathcal{H}) \subseteq \mathbb{B}_{A^{1/2}}(\mathcal{H})
\subseteq \mathbb{B}^{A}(\mathcal{H}) \subseteq \mathbb{B}(\mathcal{H})$
hold with equality if $A$ is one-to-one and has a closed range.
For an account of results, we refer the reader to \cite{Ar.Co.Go, Fo.Go}.

The $A$-numerical range of $T\in\mathbb{B}(\mathcal{H})$
is a subset of the set of complex numbers $\mathbb{C}$ and it is defined by
$W_A(T) := \big\{{\langle Tx, x\rangle}_A: \,\, x\in \mathcal{H},\, {\|x\|}_A = 1\big\}$.
It is known as well that $W_A(T)$ is a nonempty convex subset of $\mathbb{C}$ (not necessarily
closed), and its supremum modulus, denoted by $w_A(T) = \sup\big\{|\xi|: \,\, \xi\in W_A(T)\big\}$, is called
the $A$-numerical radius of $T$ (see \cite{Ba.Ka.Ah}).
It is a generalization of the concept of numerical radius of an operator:
recall that the numerical radius of $T\in\mathbb{B}(\mathcal{H})$ is defined as
$w(T) := \sup\big\{|\langle Tx, x\rangle|: \,\, x\in \mathcal{H},\, \|x\| = 1\big\}$.
Notice that it may happen that $w_A(T) = + \infty$ for some $T\in\mathbb{B}(\mathcal{H})$.
For example, consider operators
$A = \begin{bmatrix}
1 & 0 \\
0 & 0
\end{bmatrix}$ and
$T = \begin{bmatrix}
0 & 1 \\
1 & 0
\end{bmatrix}$.
It has recently been shown in \cite[Theorem 2.5]{Z}
that if $T\in\mathbb{B}_{A^{1/2}}(\mathcal{H})$, then
\begin{align}\label{I.S1.2}
w_A(T) = \displaystyle{\sup_{\theta \in \mathbb{R}}}
{\left\|\frac{e^{i\theta}T + (e^{i\theta}T)^{\sharp_A}}{2}\right\|}_A.
\end{align}
In the special case $A = I$, we have
\begin{align}\label{I.S1.3}
w(T) = \displaystyle{\sup_{\theta \in \mathbb{R}}} \big\|\mbox{Re}(e^{i\theta}T)\big\|.
\end{align}
Moreover, it is known that $w_A(\cdot)$ is a seminorm on $\mathbb{B}_{A^{1/2}}(\mathcal{H})$
and satisfies $\frac{1}{2}{\|T\|}_{A} \leq w_A(T)\leq {\|T\|}_{A}$ for all $T\in \mathbb{B}_{A^{1/2}}(\mathcal{H})$.
For proofs and more facts about $A$-numerical radius of operators,
we refer the reader to \cite{Ba.Ka.Ah, Z} and the references therein.
Some other related topics can be found in \cite{Ch.Ka.Ps, K.S, Wi.Se.Su, M.X.Z, Ps, Su, Zh.Ho.He}

If $T, S\in\mathbb{B}_{A^{1/2}}(\mathcal{H})$ and $A^{1/2}T = \lambda A^{1/2}S$ for some complex unit $\lambda$,
then for every $R\in\mathbb{B}_{A^{1/2}}(\mathcal{H})$,
it is easy to see that $|{\langle TRx, x\rangle}_{A}| = |{\langle SRx, x\rangle}_{A}|$ for all $x\in \mathcal{H}$,
and therefore
\begin{align}\label{proplem}
w_{A}(TR) = w_{A}(SR) \qquad \big(R\in\mathbb{B}_{A^{1/2}}(\mathcal{H})\big).
\end{align}
However, the converse implication is not obvious and hence it is natural to
consider the problem of characterizing all operators $T, S\in\mathbb{B}_{A^{1/2}}(\mathcal{H})$
satisfying (\ref{proplem}).
In the next section, by using a construction from \cite{Ar.Co.Go, Br.Ro},
for any $T, S\in\mathbb{B}_{A^{1/2}}(\mathcal{H})$,
we show that $w_A(TR) = w_A(SR)$ (resp., $W_A(TR) = W_A(SR)$) for all $R\in\mathbb{B}_{A^{1/2}}(\mathcal{H})$
if and only if $A^{1/2}T = \lambda A^{1/2}S$ (resp., $A^{1/2}T = A^{1/2}S$) for some complex unit $\lambda$.
\section{Results}
In order to achieve the goals of the present section, we need some prerequisites.
For the positive operator $A\in \mathbb{B}(\mathcal{H})$, the semi-inner product ${\langle \cdot, \cdot\rangle}_{A}$
induces on the quotient $\mathcal{H}/\mathcal{N}(A)$
an inner product which is not complete unless $\mathcal{R}(A)$ is closed.
A canonical construction due to de Branges and Rovnyak \cite{Br.Ro} (see also \cite{Co.Gh, Co.Ma.St})
shows that the completion of $\mathcal{H}/\mathcal{N}(A)$ is isometrically isomorphic
to $\mathcal{R}(A^{1/2})$, with the inner product
\begin{align}\label{I.1}
[A^{1/2}x, A^{1/2}y]: = \big\langle P_{\overline{\mathcal{R}(A^{1/2})}}\,x, P_{\overline{\mathcal{R}(A^{1/2})}}\,y\big\rangle
\qquad (x, y \in \mathcal{H}).
\end{align}
The Hilbert space $\big(\mathcal{R}(A^{1/2}), [\cdot, \cdot]\big)$ will be denoted by $\mathbf{R}(A^{1/2})$.
Moreover, it can be checked that $\mathcal{R}(A)$ is dense in $\mathbf{R}(A^{1/2})$ and by (\ref{I.1}) we have
\begin{align}\label{I.2}
[Ax, Ay] = {\langle x, y\rangle}_{A} \qquad (x, y \in \mathcal{H}),
\end{align}
whence
\begin{align}\label{I.3}
{\|Ax\|}_{\mathbf{R}(A^{1/2})} = {\|x\|}_{A} \qquad (x\in \mathcal{H}).
\end{align}
The following results can be found in \cite{Ar.Co.Go}.
\begin{proposition}\cite[Proposition 2.1]{Ar.Co.Go}\label{L.1}
For operator $M_{A}: \mathcal{H} \rightarrow \mathbf{R}(A^{1/2})$
defined by $M_{A}x = Ax$ the following assertions hold.
\begin{itemize}
\item[(i)] $M_{A} \in \mathbb{B}(\mathcal{H}, \mathbf{R}(A^{1/2}))$
and $\mathcal{R}(M_{A}) = \mathcal{R}(A)$.
\item[(ii)] $M^*_{A}: \mathbf{R}(A^{1/2}) \rightarrow \mathcal{H}$,
$M^*_{A}(A^{1/2}x) = A^{1/2}x$ and $\mathcal{R}(M^*_{A}) = \mathcal{R}(A^{1/2})$.
\end{itemize}
\end{proposition}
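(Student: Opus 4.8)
The plan is to read off the mapping properties of $M_A$ from the isometry identity (\ref{I.3}), and then to identify the Hilbert space adjoint $M_A^*$ by testing its defining relation against the inner product (\ref{I.1}).

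First I would establish (i). For every $x \in \mathcal{H}$, identity (\ref{I.3}) gives ${\|M_A x\|}_{\mathbf{R}(A^{1/2})} = {\|Ax\|}_{\mathbf{R}(A^{1/2})} = {\|x\|}_A = \|A^{1/2}x\| \le \|A\|^{1/2}\|x\|$, so $M_A$ is a bounded linear operator from $\mathcal{H}$ into $\mathbf{R}(A^{1/2})$ (indeed $\|M_A\| \le \|A\|^{1/2}$); and $\mathcal{R}(M_A) = \{Ax : x \in \mathcal{H}\} = \mathcal{R}(A)$ is immediate from the definition of $M_A$.

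For (ii), I would use that the adjoint $M_A^* \colon \mathbf{R}(A^{1/2}) \to \mathcal{H}$ is characterised by $\langle M_A^* \xi, x\rangle = [\xi, M_A x] = [\xi, Ax]$ for all $\xi \in \mathbf{R}(A^{1/2})$ and all $x \in \mathcal{H}$. Since $\mathbf{R}(A^{1/2})$ and $\mathcal{R}(A^{1/2})$ coincide as sets, every such $\xi$ is of the form $A^{1/2}y$; putting $P := P_{\overline{\mathcal{R}(A^{1/2})}}$ and using $Ax = A^{1/2}(A^{1/2}x)$ together with $A^{1/2}x \in \overline{\mathcal{R}(A^{1/2})}$, definition (\ref{I.1}) yields $[A^{1/2}y, Ax] = \langle Py, A^{1/2}x\rangle = \langle y, A^{1/2}x\rangle = \langle A^{1/2}y, x\rangle$. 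As this holds for every $x \in \mathcal{H}$, it forces $M_A^*(A^{1/2}y) = A^{1/2}y$, whence $\mathcal{R}(M_A^*) = \{A^{1/2}y : y \in \mathcal{H}\} = \mathcal{R}(A^{1/2})$.

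I do not expect a genuine obstacle: the two computations are short once the de Branges--Rovnyak background recalled before the statement is in hand. The points to have cleanly in place are precisely that background — that $[\cdot, \cdot]$ in (\ref{I.1}) is well defined on $\mathcal{R}(A^{1/2})$, because $A^{1/2}x = A^{1/2}x'$ forces $x - x' \in \mathcal{N}(A^{1/2}) = \mathcal{N}(A) = \mathcal{R}(A^{1/2})^{\perp}$ and hence $Px = Px'$; and that $\big(\mathcal{R}(A^{1/2}), [\cdot, \cdot]\big)$ is a Hilbert space, the restriction of $A^{1/2}$ to $\overline{\mathcal{R}(A^{1/2})}$ being an isometric isomorphism onto it, so that $M_A^*$ is a bona fide bounded operator there. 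The one spot where a little care is needed in the main computation is knowing when the projection $P$ may be dropped — namely on vectors already lying in $\overline{\mathcal{R}(A^{1/2})}$, such as $A^{1/2}x$ — which is exactly what makes the identity in (ii) collapse to $A^{1/2}y \mapsto A^{1/2}y$.
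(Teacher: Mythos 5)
Your argument is correct: (i) follows from the isometry identity (\ref{I.3}) together with $\|x\|_A=\|A^{1/2}x\|\le\|A\|^{1/2}\|x\|$, and your computation $[A^{1/2}y,Ax]=\langle A^{1/2}y,x\rangle$ via (\ref{I.1}) correctly identifies $M_A^*(A^{1/2}y)=A^{1/2}y$ and hence $\mathcal{R}(M_A^*)=\mathcal{R}(A^{1/2})$. The paper itself gives no proof of this proposition (it is quoted from \cite[Proposition 2.1]{Ar.Co.Go}), and your verification is essentially the standard argument given there, with the well-definedness and completeness caveats about $\mathbf{R}(A^{1/2})$ appropriately noted.
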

\begin{proposition}\cite[Theorem 2.3]{Ar.Co.Go}\label{L.2}
Let $\widetilde{T}: \mathcal{R}(A^{1/2}) \rightarrow \mathcal{R}(A^{1/2})$ and
$Z: \mathcal{R}(A^{1/2}) \rightarrow \mathcal{R}(A^{1/2})$ be linear operators such that
$\big\langle \widetilde{T}(A^{1/2}x), A^{1/2}y\big\rangle = \big\langle A^{1/2}x, Z(A^{1/2}y)\big\rangle$
for every $x, y \in \mathcal{H}$. If $\widetilde{T}$ is bounded in $\mathbf{R}(A^{1/2})$ then $\widetilde{T}$ is bounded in $\mathcal{H}$.
\end{proposition}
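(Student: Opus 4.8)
The plan is to transport the whole problem into the Hilbert space $\mathbf{R}(A^{1/2})$, whose underlying set is exactly $\mathcal{R}(A^{1/2})$, so that $\widetilde{T}$ and $Z$ are everywhere-defined operators on it and, by assumption, $\widetilde{T}\in\mathbb{B}\big(\mathbf{R}(A^{1/2})\big)$ (note the hypothesis merely says $\langle\widetilde{T}u,v\rangle=\langle u,Zv\rangle$ for all $u,v\in\mathcal{R}(A^{1/2})$, since $A^{1/2}x$ runs over $\mathcal{R}(A^{1/2})$). First I would put $D:=M_{A}M^{*}_{A}$; by Proposition \ref{L.1} this is a positive self-adjoint member of $\mathbb{B}\big(\mathbf{R}(A^{1/2})\big)$ (positivity since $[Du,u]={\|M^{*}_{A}u\|}^{2}$), and because Proposition \ref{L.1}(ii) says $M^{*}_{A}$ acts as the identity on $\mathcal{R}(A^{1/2})$, the adjoint relation for $M_{A}$ yields the basic identity $\langle a,b\rangle=[Da,b]$ for all $a,b\in\mathcal{R}(A^{1/2})$; in particular ${\|u\|}^{2}=[Du,u]$. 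Writing $\widetilde{T}^{*}$ for the adjoint of $\widetilde{T}$ computed in $\mathbf{R}(A^{1/2})$, it then suffices (and is in fact equivalent) to show $\widetilde{T}^{*}D\widetilde{T}\le c^{2}D$ in $\mathbb{B}\big(\mathbf{R}(A^{1/2})\big)$ for some $c>0$.

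Next I would feed the hypothesis into this set-up. Applying the basic identity to both sides of $\langle\widetilde{T}u,v\rangle=\langle u,Zv\rangle$ and using $D=D^{*}$ gives $[D\widetilde{T}u,v]=[Du,Zv]=[u,DZv]$ for all $u,v\in\mathcal{R}(A^{1/2})$; since $D\widetilde{T}$ is bounded this forces $\widetilde{T}^{*}D=(D\widetilde{T})^{*}=DZ$ as bounded operators on $\mathbf{R}(A^{1/2})$. Hence $\widetilde{T}^{*}D\widetilde{T}=(DZ)\widetilde{T}=D(Z\widetilde{T})$, so $\widetilde{T}^{*}D\widetilde{T}$ is a positive self-adjoint operator in $\mathbb{B}\big(\mathbf{R}(A^{1/2})\big)$ whose range lies inside $\mathcal{R}(D)$.

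The last step turns this range inclusion into the required inequality. From $\mathcal{R}(\widetilde{T}^{*}D\widetilde{T})\subseteq\mathcal{R}(D)$, Douglas' range-inclusion theorem gives $(\widetilde{T}^{*}D\widetilde{T})^{2}\le\lambda D^{2}$ for some $\lambda\ge 0$ (both operators are self-adjoint), and operator monotonicity of $t\mapsto t^{1/2}$ then yields $\widetilde{T}^{*}D\widetilde{T}\le\sqrt{\lambda}\,D$; combining with the first paragraph, ${\|\widetilde{T}u\|}^{2}=[\widetilde{T}^{*}D\widetilde{T}u,u]\le\sqrt{\lambda}\,[Du,u]=\sqrt{\lambda}\,{\|u\|}^{2}$ for every $u\in\mathcal{R}(A^{1/2})$, which is the assertion. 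The delicate point, and the reason a naive closed graph argument is unavailable, is that $\big(\mathcal{R}(A^{1/2}),\|\cdot\|\big)$ need not be complete; the whole device is to argue inside the complete space $\mathbf{R}(A^{1/2})$, where the hypothesis on $Z$ is exactly the factorization $\widetilde{T}^{*}D=DZ$ producing the crucial range inclusion. I expect the basic identity and the fact that $M^{*}_{A}$ restricts to the identity on $\mathcal{R}(A^{1/2})$ to be routine, while the genuinely non-formal ingredient is the implication $\mathcal{R}(X)\subseteq\mathcal{R}(Y)\Rightarrow X\le cY$ for positive operators $X,Y$, which is Douglas' theorem together with operator monotonicity of the square root.
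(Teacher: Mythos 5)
This proposition is quoted in the paper from \cite[Theorem 2.3]{Ar.Co.Go} without any proof, so there is no internal argument to compare yours against; I can only assess your proof on its own terms, and it is correct. The key identity $\langle a,b\rangle=[Da,b]$ for $a,b\in\mathcal{R}(A^{1/2})$ with $D=M_AM_A^*$ is right (it is exactly $[M_AM_A^*a,b]=\langle M_A^*a,M_A^*b\rangle$ together with the fact that $M_A^*$ acts as the inclusion of $\mathcal{R}(A^{1/2})$ into $\mathcal{H}$), and it correctly converts the hypothesis into $[D\widetilde{T}u,v]=[u,DZv]$, hence $\widetilde{T}^*D=DZ$ as bounded operators on the complete space $\mathbf{R}(A^{1/2})$ --- note that this only shows $DZ$ is bounded, not $Z$ itself, but your subsequent steps never use boundedness of $Z$. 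The factorization $\widetilde{T}^*D\widetilde{T}=(DZ)\widetilde{T}$ gives the range inclusion $\mathcal{R}(\widetilde{T}^*D\widetilde{T})\subseteq\mathcal{R}(D)$, Douglas' theorem applied to these two positive operators gives $(\widetilde{T}^*D\widetilde{T})^2\le\lambda D^2$, L\"owner--Heinz upgrades this to $\widetilde{T}^*D\widetilde{T}\le\sqrt{\lambda}\,D$, and evaluating both sides at $u$ yields $\|\widetilde{T}u\|^2\le\sqrt{\lambda}\,\|u\|^2$ in the norm of $\mathcal{H}$, which is the assertion. Your remark that the closed graph theorem is not directly available because $\big(\mathcal{R}(A^{1/2}),\|\cdot\|\big)$ is generally incomplete is also apt; the reduction to an operator inequality in the complete space $\mathbf{R}(A^{1/2})$ is precisely what circumvents it. In short: a complete and correct proof of the cited result, resting on Douglas' range-inclusion theorem and operator monotonicity of the square root.
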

\begin{proposition}\cite[Proposition 3.6]{Ar.Co.Go}\label{L.3}
Consider $T\in\mathbb{B}(\mathcal{H})$. Then, there exists $\widetilde{T}\in\mathbb{B}(\mathbf{R}(A^{1/2}))$
such that $\widetilde{T}M_{A} = M_{A}T$ if and only if $T\in\mathbb{B}_{A^{1/2}}(\mathcal{H})$.
In such case $\widetilde{T}$ is unique.
\end{proposition}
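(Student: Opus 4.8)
The plan is to construct $\widetilde{T}$ explicitly on the dense subspace $\mathcal{R}(A)$ of $\mathbf{R}(A^{1/2})$ by the formula $\widetilde{T}(Ax):=A(Tx)$ and to extend it by continuity; the content of the statement is exactly that membership in $\mathbb{B}_{A^{1/2}}(\mathcal{H})$ is both what makes this formula well posed and bounded, and what is forced on $T$ by the existence of such an intertwiner. Throughout I will use that, by (\ref{I.3}), the map $x\mapsto Ax$ is an isometry from $(\mathcal{H},{\|\cdot\|}_A)$ onto $\mathcal{R}(A)\subseteq\mathbf{R}(A^{1/2})$, and that $\mathcal{R}(A)$ is dense in the Hilbert space $\mathbf{R}(A^{1/2})$.

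For the sufficiency, assume $T\in\mathbb{B}_{A^{1/2}}(\mathcal{H})$, so that ${\|Tx\|}_A\le c{\|x\|}_A$ for all $x\in\mathcal{H}$ and some $c>0$. I would first check that $\widetilde{T}(Ax):=A(Tx)$ is well defined on $\mathcal{R}(A)$: if $Ax=Ax'$ then $x-x'\in\mathcal{N}(A)$, so ${\|x-x'\|}_A=0$, hence ${\|T(x-x')\|}_A\le c{\|x-x'\|}_A=0$, i.e. $Tx-Tx'\in\mathcal{N}(A)$ and thus $A(Tx)=A(Tx')$. Linearity on $\mathcal{R}(A)$ is clear, and boundedness follows from (\ref{I.3}) via ${\|\widetilde{T}(Ax)\|}_{\mathbf{R}(A^{1/2})}={\|A(Tx)\|}_{\mathbf{R}(A^{1/2})}={\|Tx\|}_A\le c{\|x\|}_A=c{\|Ax\|}_{\mathbf{R}(A^{1/2})}$. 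By density of $\mathcal{R}(A)$, this operator extends uniquely to some $\widetilde{T}\in\mathbb{B}(\mathbf{R}(A^{1/2}))$, and by construction $\widetilde{T}M_Ax=\widetilde{T}(Ax)=A(Tx)=M_ATx$ for every $x$, that is, $\widetilde{T}M_A=M_AT$.

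For the necessity, suppose $\widetilde{T}\in\mathbb{B}(\mathbf{R}(A^{1/2}))$ satisfies $\widetilde{T}M_A=M_AT$. Then, applying (\ref{I.3}) twice, for every $x\in\mathcal{H}$ we obtain ${\|Tx\|}_A={\|M_ATx\|}_{\mathbf{R}(A^{1/2})}={\|\widetilde{T}M_Ax\|}_{\mathbf{R}(A^{1/2})}\le{\|\widetilde{T}\|}\,{\|M_Ax\|}_{\mathbf{R}(A^{1/2})}={\|\widetilde{T}\|}\,{\|x\|}_A$, which is precisely the inequality defining $\mathbb{B}_{A^{1/2}}(\mathcal{H})$; hence $T\in\mathbb{B}_{A^{1/2}}(\mathcal{H})$. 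Uniqueness is then immediate: if $\widetilde{T}_1M_A=M_AT=\widetilde{T}_2M_A$, then $\widetilde{T}_1$ and $\widetilde{T}_2$ coincide on $\mathcal{R}(M_A)=\mathcal{R}(A)$ by Proposition \ref{L.1}(i), and since this set is dense and both operators are bounded, $\widetilde{T}_1=\widetilde{T}_2$ on $\mathbf{R}(A^{1/2})$.

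The one step I expect to require genuine care is the well-definedness of $\widetilde{T}$ on $\mathcal{R}(A)$, i.e. recognizing that the seminorm estimate ${\|Tx\|}_A\le c{\|x\|}_A$ forces $T\big(\mathcal{N}(A)\big)\subseteq\mathcal{N}(A)$, so that the value $A(Tx)$ does not depend on the representative $x$ chosen for $Ax$. Everything after that is a standard ``bounded on a dense subspace, therefore extends uniquely'' argument together with the isometry (\ref{I.3}); in particular no appeal to Proposition \ref{L.2} is needed here, since $\widetilde{T}$ is produced from the outset as a bounded operator on $\mathbf{R}(A^{1/2})$.
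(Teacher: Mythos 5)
Your proof is correct: the well-definedness of $\widetilde{T}(Ax):=A(Tx)$ via $T(\mathcal{N}(A))\subseteq\mathcal{N}(A)$, the bound through the isometry (\ref{I.3}), the extension by density of $\mathcal{R}(A)$ in $\mathbf{R}(A^{1/2})$, and the converse estimate ${\|Tx\|}_A\le{\|\widetilde{T}\|}\,{\|x\|}_A$ together give exactly the stated equivalence and uniqueness. Note that the paper offers no proof of its own here --- Proposition \ref{L.3} is quoted from \cite[Proposition 3.6]{Ar.Co.Go} --- and your argument is the standard one behind that result, using the seminorm characterization of $\mathbb{B}_{A^{1/2}}(\mathcal{H})$ recorded in the introduction.
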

Let us recall that by \cite[Lemma 2.4]{Zh.Ho.He} we have
\begin{align}\label{I.4}
w(x \otimes y) = \frac{|\langle x, y\rangle| + \|x\|\,\|y\|}{2},
\end{align}
for all $x, y \in\mathcal{H}$.
Here, $x \otimes y$ denotes the rank one operator in $\mathbb{B}(\mathcal{H})$ defined by
$(x \otimes y)z := \langle z, y\rangle x$ for all $z \in\mathcal{H}$.
Now, following by the rank one operators in $\mathbb{B}(\mathcal{H})$, for $x, y \in \mathcal{H}$
we introduce the ``$A$-rank one operator" $x \otimes_{A} y$ as follows:
\begin{align}\label{I.5}
(x \otimes_{A} y)z = {\langle z, y\rangle}_{A}x \qquad (z \in \mathcal{H}).
\end{align}
Next, we present some properties of the $A$-rank one operators.
\begin{proposition}\label{L.4}
Let $x, y \in \mathcal{H}$ and $T\in\mathbb{B}_{A^{1/2}}(\mathcal{H})$.
Then $x \otimes_{A} y \in\mathbb{B}_{A^{1/2}}(\mathcal{H})$ and
the next assertions hold:
\begin{itemize}
\item[(i)] ${\|x \otimes_{A} y\|}_{A} = {\|x\|}_{A}{\|y\|}_{A}$
and $(x \otimes_{A} y)^{\sharp_{A}} = y \otimes_{A} x$.
\item[(ii)] $T(x \otimes_{A} y) = Tx \otimes_{A} y$ and $(x \otimes_{A} y)T = x \otimes_{A} T^{\sharp_{A}}y$.
\item[(iii)] $\widetilde{x \otimes_{A} y} = Ax \,\widetilde{\otimes}_{A}\, Ay$,
where $\widetilde{\otimes}_{A}$ is tensor product in $\mathbf{R}(A^{1/2})$.
\end{itemize}
\end{proposition}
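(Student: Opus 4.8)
The plan is to verify each assertion by direct manipulation of the semi-inner product, using the Cauchy--Schwarz inequality $|{\langle z, w\rangle}_{A}| \le {\|z\|}_{A}{\|w\|}_{A}$ and, for (iii), the uniqueness clause of Proposition \ref{L.3}; all of them are short computations.

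First I would establish membership together with the norm formula in (i). For every $z\in\mathcal{H}$ one has ${\|(x \otimes_{A} y)z\|}_{A} = |{\langle z, y\rangle}_{A}|\,{\|x\|}_{A} \le {\|x\|}_{A}{\|y\|}_{A}{\|z\|}_{A}$, so by the intrinsic description of $\mathbb{B}_{A^{1/2}}(\mathcal{H})$ recalled in the introduction, $x \otimes_{A} y \in \mathbb{B}_{A^{1/2}}(\mathcal{H})$ and ${\|x \otimes_{A} y\|}_{A} \le {\|x\|}_{A}{\|y\|}_{A}$. For the reverse inequality, if ${\|y\|}_{A}\ne 0$ set $P := P_{\overline{\mathcal{R}(A)}}$, note that $AP = A$ and hence ${\langle z, Py\rangle}_{A} = {\langle z, y\rangle}_{A}$ and ${\|Py\|}_{A} = {\|y\|}_{A}$, and test the supremum defining ${\|\cdot\|}_{A}$ at $z_{0} := Py/{\|y\|}_{A} \in \overline{\mathcal{R}(A)}$: this gives $(x \otimes_{A} y)z_{0} = {\|y\|}_{A}\,x$, so ${\|x \otimes_{A} y\|}_{A} \ge {\|x\|}_{A}{\|y\|}_{A}$. (If ${\|y\|}_{A} = 0$ then $y\in\mathcal{N}(A)$, whence $x \otimes_{A} y = 0$ and there is nothing to prove.)

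Next I would treat the $A$-adjoint identity in (i) and the two identities in (ii). For all $z, w\in\mathcal{H}$,
\[
{\langle (x \otimes_{A} y)z, w\rangle}_{A} = {\langle z, y\rangle}_{A}\,{\langle x, w\rangle}_{A} = \overline{{\langle w, x\rangle}_{A}}\,{\langle z, y\rangle}_{A} = {\langle z, (y \otimes_{A} x)w\rangle}_{A},
\]
so $y \otimes_{A} x$ is an $A$-adjoint of $x \otimes_{A} y$; checking that its range lies in $\overline{\mathcal{R}(A)}$ identifies it with the reduced solution, i.e. $(x \otimes_{A} y)^{\sharp_{A}} = y \otimes_{A} x$. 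The two identities in (ii) are one-line computations: $\big(T(x \otimes_{A} y)\big)z = {\langle z, y\rangle}_{A}\,Tx = (Tx \otimes_{A} y)z$, and $\big((x \otimes_{A} y)T\big)z = {\langle Tz, y\rangle}_{A}\,x = {\langle z, T^{\sharp_{A}}y\rangle}_{A}\,x = (x \otimes_{A} T^{\sharp_{A}}y)z$, the middle equality being the defining relation of $T^{\sharp_{A}}$.

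For (iii), since $x \otimes_{A} y\in\mathbb{B}_{A^{1/2}}(\mathcal{H})$ by the first step, Proposition \ref{L.3} furnishes a unique $\widetilde{x \otimes_{A} y}\in\mathbb{B}(\mathbf{R}(A^{1/2}))$ with $\widetilde{x \otimes_{A} y}\,M_{A} = M_{A}(x \otimes_{A} y)$. The rank-one operator $Ax\,\widetilde{\otimes}_{A}\,Ay$ on $\mathbf{R}(A^{1/2})$ is bounded, and for each $z\in\mathcal{H}$,
\[
\big(Ax\,\widetilde{\otimes}_{A}\,Ay\big)(M_{A}z) = [Az, Ay]\,Ax = {\langle z, y\rangle}_{A}\,Ax = M_{A}\big((x \otimes_{A} y)z\big),
\]
where the second equality is (\ref{I.2}). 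Thus $Ax\,\widetilde{\otimes}_{A}\,Ay$ satisfies the same intertwining relation as $\widetilde{x \otimes_{A} y}$, and by the uniqueness in Proposition \ref{L.3} the two coincide.

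I do not anticipate a real difficulty; the one place that deserves care is pinning down in (i) that $y \otimes_{A} x$ is not merely an $A$-adjoint of $x \otimes_{A} y$ but the \emph{reduced} one — this is where the range condition $\mathcal{R}(T^{\sharp_{A}})\subseteq\overline{\mathcal{R}(A)}$ must be invoked — and, in (iii), being sure that $Ax\,\widetilde{\otimes}_{A}\,Ay$ is honestly a bounded operator on all of $\mathbf{R}(A^{1/2})$ so that the uniqueness in Proposition \ref{L.3} is legitimately applicable.
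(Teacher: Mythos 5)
Your treatment of the norm formula, of (ii), and of (iii) is correct and follows essentially the same route as the paper: the membership and (i)--(ii) are the ``direct from the definition'' computations the paper leaves implicit (your test vector $P_{\overline{\mathcal{R}(A)}}y/{\|y\|}_A$ for the lower bound is exactly the right device), and your argument for (iii) is the paper's verbatim -- intertwining plus the uniqueness clause of Proposition \ref{L.3}.

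There is, however, one step that does not go through as you describe it: the identification $(x \otimes_A y)^{\sharp_A} = y \otimes_A x$. You correctly note that $y \otimes_A x$ is \emph{an} $A$-adjoint, i.e. $A(y \otimes_A x) = (x \otimes_A y)^* A$, and that the reduced solution is singled out among all solutions by the range condition $\mathcal{R}(X) \subseteq \overline{\mathcal{R}(A)}$. But the range of $y \otimes_A x$ is $\mathbb{C}y$ (when $Ax \neq 0$), and $y$ need not lie in $\overline{\mathcal{R}(A)}$, so the ``check'' you propose cannot be completed in general. The actual reduced solution is $\big(P_{\overline{\mathcal{R}(A)}}y\big) \otimes_A x$ (since $AP_{\overline{\mathcal{R}(A)}} = A$ and this operator does have range in $\overline{\mathcal{R}(A)}$), which coincides with $y \otimes_A x$ only when $P_{\overline{\mathcal{R}(A)}}y = y$ or $Ax = 0$. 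To repair the argument you should either record the identity in the corrected form $(x \otimes_A y)^{\sharp_A} = \big(P_{\overline{\mathcal{R}(A)}}y\big) \otimes_A x$, or observe that the two candidates differ by $\big((I-P_{\overline{\mathcal{R}(A)}})y\big) \otimes_A x$, an operator of $A$-seminorm zero, which is all that is needed in the only place the identity is used later (the computation of $w_A(x \otimes_A y)$ in Proposition \ref{P.5}). The paper's own proof is silent on this point, so the imprecision originates in the statement rather than in your strategy, but as written your justification asserts a range inclusion that is false in general.
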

\begin{proof}
The statements (i)-(ii) follow directly from the definition of $\otimes_{A}$.

To prove (iii), from Proposition \ref{L.2} and (i) it follows that $x \otimes_{A} y\in\mathbb{B}_{A^{1/2}}(\mathcal{H})$
and so, by Proposition \ref{L.3}, there is a unique $\widetilde{x \otimes_{A} y}\in\mathbb{B}(\mathbf{R}(A^{1/2}))$
such that $\big(\widetilde{x \otimes_{A} y}\big)M_{A} = M_{A}(x \otimes_{A} y)$.
Now, let $z\in \mathcal{H}$. By Proposition \ref{L.1}, (\ref{I.2}) and (\ref{I.5}), we have
\begin{align*}
\big(Ax \,\widetilde{\otimes}_{A}\, Ay\big)M_{A}z &= \big(Ax \,\widetilde{\otimes}_{A}\, Ay\big)Az
= [Az, Ay]Ax
\\&= {\langle z, y\rangle}_{A}Ax
= {\langle z, y\rangle}_{A}M_{A}x = M_{A}(x \otimes_{A} y)z.
\end{align*}
Thus $\big(Ax \,\widetilde{\otimes}_{A}\, Ay\big)M_{A}= M_{A}(x \otimes_{A} y)$.
Since $\widetilde{x \otimes_{A} y}$ is unique, therefore we conclude that
$\widetilde{x \otimes_{A} y} = Ax \,\widetilde{\otimes}_{A}\, Ay$.
\end{proof}
\newpage
Now we are able to establish the following result.
\begin{proposition}\label{P.5}
Let $x, y \in \mathcal{H}$. Then $w_{A}(x \otimes_{A} y) = \frac{|{\langle x, y\rangle}_{A}| + {\|x\|}_{A}{\|y\|}_{A}}{2}$.
\end{proposition}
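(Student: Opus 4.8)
The plan is to transport the problem to the genuine Hilbert space $\mathbf{R}(A^{1/2})$, where the classical rank-one formula (\ref{I.4}) applies. First I would invoke Proposition \ref{L.4}(iii) to identify the operator $\widetilde{x \otimes_{A} y}$ with the honest tensor product $Ax \,\widetilde{\otimes}_{A}\, Ay$ acting on $\mathbf{R}(A^{1/2})$. The key link I need is that the $A$-numerical radius of $x \otimes_{A} y$ coincides with the ordinary numerical radius of $\widetilde{x \otimes_{A} y}$ computed in $\mathbf{R}(A^{1/2})$; this should follow from Proposition \ref{L.3} together with the intertwining $\widetilde{T}M_A = M_A T$ and the isometry relation (\ref{I.3}), since for $x \in \mathcal{H}$ with ${\|x\|}_A = 1$ one has ${\langle Tx, x\rangle}_A = [M_A T x, M_A x] = [\widetilde{T}(M_A x), M_A x]$ with ${\|M_A x\|}_{\mathbf{R}(A^{1/2})} = {\|x\|}_A = 1$, and conversely vectors of the form $M_A x = Ax$ are dense in $\mathbf{R}(A^{1/2})$, so the two suprema agree.

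Granting $w_A(x \otimes_A y) = w\big(Ax \,\widetilde{\otimes}_{A}\, Ay\big)$, I would then apply (\ref{I.4}) in the Hilbert space $\mathbf{R}(A^{1/2})$ to the vectors $Ax$ and $Ay$, obtaining
\begin{align*}
w\big(Ax \,\widetilde{\otimes}_{A}\, Ay\big)
= \frac{\big|[Ax, Ay]\big| + {\|Ax\|}_{\mathbf{R}(A^{1/2})}\,{\|Ay\|}_{\mathbf{R}(A^{1/2})}}{2}.
\end{align*}
Finally I would translate each ingredient back using (\ref{I.2}) and (\ref{I.3}): $[Ax, Ay] = {\langle x, y\rangle}_A$ and ${\|Ax\|}_{\mathbf{R}(A^{1/2})} = {\|x\|}_A$, ${\|Ay\|}_{\mathbf{R}(A^{1/2})} = {\|y\|}_A$, which yields exactly $w_A(x \otimes_A y) = \frac{|{\langle x, y\rangle}_A| + {\|x\|}_A {\|y\|}_A}{2}$.

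The main obstacle I anticipate is justifying rigorously that $w_A(T) = w(\widetilde{T})$ for $T \in \mathbb{B}_{A^{1/2}}(\mathcal{H})$ — more precisely, that taking the supremum over unit vectors of the form $Ax$ in $\mathbf{R}(A^{1/2})$ gives the same value as over all unit vectors of $\mathbf{R}(A^{1/2})$. This is where the density of $\mathcal{R}(A)$ in $\mathbf{R}(A^{1/2})$ (noted after (\ref{I.1})) and the continuity of $\xi \mapsto [\widetilde{T}\xi, \xi]$ on the unit sphere must be combined; one normalizes an approximating net $A x_n \to \xi$ and passes to the limit, using that $\widetilde{T}$ is bounded on $\mathbf{R}(A^{1/2})$. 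It is plausible this identity is already implicitly available from (\ref{I.S1.2}) or the cited references, in which case the argument collapses to a one-line computation; if not, the density/normalization step is a short but genuine estimate. Everything else is a routine substitution through the dictionary (\ref{I.2})–(\ref{I.3}) and an application of Proposition \ref{L.4}(iii).
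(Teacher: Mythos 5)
Your proposal is correct, and its endgame (identify $\widetilde{x \otimes_{A} y} = Ax \,\widetilde{\otimes}_{A}\, Ay$ via Proposition \ref{L.4}(iii), apply the classical rank-one formula (\ref{I.4}) in $\mathbf{R}(A^{1/2})$, translate back through (\ref{I.2})--(\ref{I.3})) is exactly the paper's. Where you diverge is in the bridging identity $w_A(x \otimes_A y) = w\big(\widetilde{x \otimes_A y}\big)$: the paper never states this as a general fact but manufactures it for this particular operator through the $\theta$-supremum formulas, starting from (\ref{I.S1.2}), using $(x\otimes_A y)^{\sharp_A} = y\otimes_A x$, rewriting the $A$-seminorm as a supremum over $z\in\overline{\mathcal{R}(A)}$ with ${\|Az\|}_{\mathbf{R}(A^{1/2})}=1$, recognizing the resulting expression as $\mathrm{Re}(e^{i\theta}\widetilde{x\otimes_A y})Az$, and then invoking (\ref{I.S1.3}); implicitly this also rests on density of $\mathcal{R}(A)$ in $\mathbf{R}(A^{1/2})$ to pass from the supremum over vectors $Az$ to the operator norm of the real part. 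You instead prove the cleaner and more general statement $w_A(T) = w(\widetilde{T})$ for any $T\in\mathbb{B}_{A^{1/2}}(\mathcal{H})$ directly from the definitions: the inclusion $W_A(T)\subseteq W(\widetilde{T})$ via ${\langle Tx,x\rangle}_A = [\widetilde{T}(Ax),Ax]$, and the reverse estimate by approximating a unit vector $\xi\in\mathbf{R}(A^{1/2})$ by normalized vectors $A x_n/{\|x_n\|}_A$ (possible since $\mathcal{R}(A)$ is dense and ${\|Ax_n\|}_{\mathbf{R}(A^{1/2})}\to 1$, so eventually ${\|x_n\|}_A\neq 0$) and using boundedness of $\widetilde{T}$ to pass to the limit in $[\widetilde{T}\eta_n,\eta_n]$. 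The obstacle you flag is genuine but fillable in a few lines exactly as you indicate, and this lemma is in fact standard in the semi-Hilbertian literature even though it is not quoted in the paper. What your route buys is a reusable general lemma and avoidance of the long seminorm computation; what the paper's route buys is staying entirely within results it has already cited, namely (\ref{I.S1.2}) and (\ref{I.S1.3}), at the cost of a lengthier chain of identities.
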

\begin{proof}
By (\ref{I.S1.2}) we have
\begingroup\makeatletter\def\f@size{10}\check@mathfonts
\begin{align*}
w_{A}(x \otimes_{A} y) & = \displaystyle{\sup_{\theta \in \mathbb{R}}}
{\left\|\frac{e^{i\theta}x \otimes_{A} y + (e^{i\theta}x \otimes_{A} y)^{\sharp_A}}{2}\right\|}_{A}
\\& = \frac{1}{2}\displaystyle{\sup_{\theta \in \mathbb{R}}}
{\Big\|e^{i\theta}x \otimes_{A} y + e^{-i\theta}y \otimes_{A} x\Big\|}_{A}
\qquad \qquad \Big(\mbox{by Proposition \ref{L.4}(i)}\Big)
\\& = \frac{1}{2}\displaystyle{\sup_{\theta \in \mathbb{R}}}
\left(\displaystyle{\sup_{z\in\overline{\mathcal{R}(A)}, {\|z\|}_{A} = 1}}
{\Big\|\big(e^{i\theta}x \otimes_{A} y + e^{-i\theta}y \otimes_{A} x\big)z\Big\|}_{A}\right)
\\& = \frac{1}{2}\displaystyle{\sup_{\theta \in \mathbb{R}}}
\left(\displaystyle{\sup_{z\in\overline{\mathcal{R}(A)}, {\|z\|}_{A} = 1}}
{\Big\|e^{i\theta}{\langle z, y\rangle}_{A}x + e^{-i\theta}{\langle z, x\rangle}_{A}y\Big\|}_{A}\right)
\qquad \Big(\mbox{by (\ref{I.5})}\Big)
\\& = \frac{1}{2}\displaystyle{\sup_{\theta \in \mathbb{R}}}
\left(\displaystyle{\sup_{z\in\overline{\mathcal{R}(A)}, {\|Az\|}_{\mathbf{R}(A^{1/2})} = 1}}
{\Big\|A\big(e^{i\theta}{\langle z, y\rangle}_{A}x + e^{-i\theta}{\langle z, x\rangle}_{A}y\big)\Big\|}_{\mathbf{R}(A^{1/2})}\right)
\\& \qquad \qquad \qquad \qquad \qquad \qquad \qquad \qquad \quad
\qquad \qquad \qquad \Big(\mbox{by (\ref{I.3})}\Big)
\\& = \frac{1}{2}\displaystyle{\sup_{\theta \in \mathbb{R}}}
\left(\displaystyle{\sup_{z\in\overline{\mathcal{R}(A)}, {\|Az\|}_{\mathbf{R}(A^{1/2})} = 1}}
{\Big\|e^{i\theta}[Az, Ay]Ax + e^{-i\theta}[Az, Ax]Ay\Big\|}_{\mathbf{R}(A^{1/2})}\right)
\\& \qquad \qquad \qquad \qquad \qquad \qquad \qquad \qquad \quad
\qquad \qquad \qquad \Big(\mbox{by (\ref{I.2})}\Big)
\\& = \frac{1}{2}\displaystyle{\sup_{\theta \in \mathbb{R}}}
\left(\displaystyle{\sup_{z\in\overline{\mathcal{R}(A)}, {\|Az\|}_{\mathbf{R}(A^{1/2})} = 1}}
{\Big\|\big(e^{i\theta}Ax \,\widetilde{\otimes}_{A}\, Ay + (e^{i\theta}Ax \,\widetilde{\otimes}_{A}\, Ay)^*\big)Az\Big\|}_{\mathbf{R}(A^{1/2})}\right)
\\& = \displaystyle{\sup_{\theta \in \mathbb{R}}}
\left(\displaystyle{\sup_{z\in\overline{\mathcal{R}(A)}, {\|Az\|}_{\mathbf{R}(A^{1/2})} = 1}}
{\Big\|\mbox{Re}(e^{i\theta}\widetilde{x \otimes_{A} y})Az\Big\|}_{\mathbf{R}(A^{1/2})}\right)
\\& \qquad \qquad \qquad \qquad \qquad \qquad
\qquad \qquad \quad \Big(\mbox{by Proposition \ref{L.4}(iii)}\Big)
\\& = \displaystyle{\sup_{\theta \in \mathbb{R}}}{\Big\|\mbox{Re}(e^{i\theta}\widetilde{x \otimes_{A} y})\Big\|}_{\mathbf{R}(A^{1/2})}
\\& = w\big(\widetilde{x \otimes_{A} y}\big)
\qquad \qquad \qquad \qquad \qquad \qquad
\qquad \qquad \qquad \Big(\mbox{by (\ref{I.S1.3})}\Big)
\\& = \frac{|[Ax, Ay]| + {\|Ax\|}_{\mathbf{R}(A^{1/2})}{\|Ay\|}_{\mathbf{R}(A^{1/2})}}{2}
\\& \qquad \qquad \qquad \qquad \qquad \qquad \quad \Big(\mbox{by Proposition \ref{L.4}(iii) and (\ref{I.4})}\Big)
\\& = \frac{|{\langle x, y\rangle}_{A}| + {\|x\|}_{A}{\|y\|}_{A}}{2}.
\qquad \qquad \qquad \qquad \qquad \Big(\mbox{by (\ref{I.2}) and (\ref{I.3})}\Big)
\end{align*}
\endgroup
Hence $w_{A}(x \otimes_{A} y) = \frac{|{\langle x, y\rangle}_{A}| + {\|x\|}_{A}{\|y\|}_{A}}{2}$.
\end{proof}
The following lemma will be useful in the proof of the next result.
\begin{lemma}\label{L.6}
Let $T, S\in\mathbb{B}_{A^{1/2}}(\mathcal{H})$.
If $A^{1/2}Tx$ and $A^{1/2}Sx$ are linearly dependent for every $A$-unit vector $x\in \mathcal{H}$,
then there exists $\lambda\in\mathbb{C}\setminus\{0\}$ such that $A^{1/2}T = \lambda A^{1/2}S$.
\end{lemma}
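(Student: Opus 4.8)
\textbf{Proof proposal for Lemma~\ref{L.6}.}
The plan is to reduce the statement to a standard fact about ``locally linearly dependent'' maps on the genuine Hilbert space $\mathbf{R}(A^{1/2})$, by passing from $T,S$ to the induced operators $\widetilde{T},\widetilde{S}$ and then using the coercive estimate coming from $\mathbb{B}_{A^{1/2}}(\mathcal{H})$. First I would observe that the hypothesis ``$A^{1/2}Tx$ and $A^{1/2}Sx$ are linearly dependent'' can be rephrased purely in terms of $A$: since $\|A^{1/2}u\|=\|u\|_A$ and $\overline{\mathcal{R}(A^{1/2})}=\overline{\mathcal{R}(A)}$, and since $[Ax,Ay]={\langle x,y\rangle}_A$, linear dependence of $A^{1/2}Tx$ and $A^{1/2}Sx$ in $\mathcal H$ is equivalent to linear dependence of $M_A Tx$ and $M_A Sx$, i.e.\ of $\widetilde T(Ax)$ and $\widetilde S(Ax)$, in $\mathbf{R}(A^{1/2})$ (using Proposition~\ref{L.3}). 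Because every vector of the form $Ax$ with $x$ an $A$-unit vector runs over a set whose closed linear span is all of $\mathbf{R}(A^{1/2})$, the hypothesis says precisely that $\widetilde T$ and $\widetilde S$ are pointwise linearly dependent on a dense subset of the Hilbert space $\mathbf{R}(A^{1/2})$, hence (by continuity of $\widetilde T,\widetilde S$ and closedness of the rank-$\le 1$ condition on pairs) pointwise linearly dependent everywhere on $\mathbf{R}(A^{1/2})$.

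Next I would invoke the elementary ``two-operator'' version of the local-linear-dependence lemma: if $B,C\in\mathbb B(\mathcal K)$ on a Hilbert (or merely vector) space $\mathcal K$ satisfy that $Bv,Cv$ are linearly dependent for every $v$, then either $B$ and $C$ are scalar multiples of each other or one of them has rank $\le 1$ (in fact one is a scalar multiple of the other unless both have rank $\le 1$; the precise trichotomy is classical, going back to Amitsur/Aupetit-type arguments, but the two-operator case is a short direct argument). Apply this to $B=\widetilde T$, $C=\widetilde S$ on $\mathcal K=\mathbf{R}(A^{1/2})$. I then want to rule out the degenerate branches and force $\widetilde T=\lambda\widetilde S$ with $\lambda\ne 0$. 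Degeneracy is excluded because $S\ne 0$ forces $\widetilde S\ne 0$, and the hypothesis ``for every $A$-unit $x$'' together with $T$ not identically controlled by a rank-one obstruction pins down a single global scalar: concretely, fix one $A$-unit vector $x_0$ with $A^{1/2}Sx_0\ne 0$ and let $\lambda$ be the scalar with $A^{1/2}Tx_0=\lambda A^{1/2}Sx_0$; a standard polarization/continuity argument over the connected set of $A$-unit vectors on which $A^{1/2}S$ does not vanish shows $\lambda$ is the same for all of them, and on the complementary part $A^{1/2}Sx=0$ forces $A^{1/2}Tx=0$ as well (since if $A^{1/2}Sx=0$ but $A^{1/2}Tx\ne0$ one can perturb $x$ slightly, staying $A$-unit, to violate local linear dependence), giving $A^{1/2}T=\lambda A^{1/2}S$ on all $A$-unit vectors and hence, by homogeneity and $\mathcal N(A^{1/2})=\mathcal N(A)$, on all of $\mathcal H$. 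Finally $\lambda\ne 0$ because otherwise $A^{1/2}T=0$, i.e.\ $T=0$ on $\overline{\mathcal R(A)}$, which is the excluded trivial situation (and even if allowed, one simply swaps the roles of $T$ and $S$).

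I expect the main obstacle to be the careful handling of the ``scalar is locally constant'' step: one must show that the pointwise scalar $\lambda(x)$ defined where $A^{1/2}Sx\ne 0$ does not jump, and must treat the locus where $A^{1/2}Sx=0$ without assuming $\mathcal R(A)$ is closed or $A$ injective. The cleanest route is to work entirely in $\mathbf{R}(A^{1/2})$, where $\widetilde S$ is a bounded operator on an honest Hilbert space, prove the trichotomy for the pair $(\widetilde T,\widetilde S)$ there (the two-operator local linear dependence lemma), conclude $\widetilde T=\lambda\widetilde S$, and then translate back via $\widetilde T M_A=M_A T$, $\widetilde S M_A=M_A S$ and $\mathcal R(M_A)=\mathcal R(A)$ dense in $\mathbf R(A^{1/2})$ to get $M_A T=\lambda M_A S$, i.e.\ $A T=\lambda A S$, hence $A^{1/2}T=\lambda A^{1/2}S$ after multiplying by $A^{-1/2}$ on $\overline{\mathcal R(A)}$ and using that both sides kill $\mathcal N(A)$. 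This packaging avoids the pointwise constancy fuss, so in the write-up I would lead with the reduction to $\widetilde T,\widetilde S$ and cite or prove the two-operator lemma, rather than argue directly with $A$-unit vectors.
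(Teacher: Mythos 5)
Your reduction to $\mathbf{R}(A^{1/2})$ and the appeal to the two-operator local linear dependence trichotomy are sound, but the step where you ``rule out the degenerate branches'' is a genuine gap, and it cannot be repaired: the lemma as stated is false, so the degenerate branch really does occur. Take $A=I$ on $\mathcal{H}=\mathbb{C}^2$, $T=e_1\otimes e_1$ and $S=e_1\otimes e_2$, i.e.\ $Tx=\langle x,e_1\rangle e_1$ and $Sx=\langle x,e_2\rangle e_1$. Then $A^{1/2}Tx$ and $A^{1/2}Sx$ lie in the line $\mathbb{C}e_1$ for every $x$, hence are linearly dependent for every unit vector, yet $T\neq\lambda S$ for every $\lambda$. (Even $S=0$, $T\neq0$ is a counterexample.) This example defeats exactly the three claims you lean on: (a) nothing in the hypothesis excludes the case that both induced operators have rank $\leq 1$ with a common range, which is the exceptional case of the local-linear-dependence lemma; (b) your assertion that $A^{1/2}Sx=0$ with $A^{1/2}Tx\neq0$ can be destroyed by a small $A$-unit perturbation is false (here $Se_1=0$, $Te_1\neq0$, and pointwise dependence survives every perturbation); and (c) continuity of the pointwise scalar on the connected set where $A^{1/2}Sx\neq0$ does not make it constant --- in the example $\lambda(x)=x_1/x_2$ is continuous and nonconstant there. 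Constancy genuinely requires two vectors $x,y$ with $A^{1/2}Sx$, $A^{1/2}Sy$ linearly independent, i.e.\ that $A^{1/2}S$ have rank at least two, and that is precisely what is missing.

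For comparison, the paper's own ``proof'' of the lemma is a two-line contradiction that inverts the quantifiers (from ``for each $\lambda$ there is a vector where proportionality with that particular $\lambda$ fails'' it concludes that pointwise dependence fails), so it does not establish the statement either; the lemma needs an extra hypothesis. Your scheme does work if one adds the information actually available at the point where the lemma is invoked in Theorem \ref{T.7}, namely ${\|Tx\|}_{A}={\|Sx\|}_{A}$ for all $A$-unit $x$ (equivalently $\|A^{1/2}Tx\|=\|A^{1/2}Sx\|$ for all $x$): this forces $A^{1/2}Tx=0$ exactly when $A^{1/2}Sx=0$ and excludes the rank-one common-range pathology (otherwise one could pick $x$ annihilated by $A^{1/2}S$ but not by $A^{1/2}T$, violating the norm equality), after which the standard additivity argument ($\lambda(x)=\lambda(x+y)=\lambda(y)$ when $A^{1/2}Sx$, $A^{1/2}Sy$ are independent, with dependent pairs handled through a third vector, and the rank-one case handled directly) produces a single $\lambda$, with $|\lambda|=1$ coming from the norm equality. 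So the honest fix is to restate the lemma with that additional hypothesis, or to absorb this argument directly into the proof of Theorem \ref{T.7}; as written, neither your argument nor the paper's proves the lemma, because no argument can.
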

\begin{proof}
Assume that $A^{1/2}T \neq \lambda A^{1/2}S$ for every nonzero complex number $\lambda$.
Then there exists an $A$-unit vector $x\in \mathcal{H}$ such that $A^{1/2}Tx \neq \lambda A^{1/2}Sx$, which contradicts our assumption.
\end{proof}
\newpage
Now we are in a position to prove our main result in this paper.
In the following, as usual, $\mathbb{T}$ is the unit circle of the complex plane,
i.e., $\mathbb{T} = \{|\alpha|:\,\, \alpha \in \mathbb{C}\}$.
\begin{theorem}\label{T.7}
Let $T, S\in\mathbb{B}_{A^{1/2}}(\mathcal{H})$. Then the following conditions are equivalent:
\begin{itemize}
\item[(i)] $A^{1/2}T = \lambda A^{1/2}S$ for some $\lambda \in \mathbb{T}$.
\item[(ii)] $w_{A}(TR) = w_{A}(SR)$ for all $R\in\mathbb{B}_{A^{1/2}}(\mathcal{H})$.
\item[(iii)] $w_{A}\big(Tx \otimes_{A} y\big) = w_{A}\big(Sx \otimes_{A} y\big)$ for all $x, y \in\mathcal{H}\setminus\{0\}$.
\end{itemize}
\end{theorem}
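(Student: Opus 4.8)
The plan is to prove the cyclic chain of implications $(i)\Rightarrow(ii)\Rightarrow(iii)\Rightarrow(i)$. The first implication is essentially already recorded in the introduction: if $A^{1/2}T=\lambda A^{1/2}S$ with $|\lambda|=1$, then for any $R\in\mathbb{B}_{A^{1/2}}(\mathcal{H})$ and any $x\in\mathcal{H}$ one has $\langle TRx,TRx\rangle_A = \|A^{1/2}TRx\|^2 = \|A^{1/2}SRx\|^2$ and more relevantly $\langle TRx,x\rangle_A = \langle A^{1/2}TRx, A^{1/2}x\rangle = \lambda\langle A^{1/2}SRx,A^{1/2}x\rangle = \lambda\langle SRx,x\rangle_A$, so $|\langle TRx,x\rangle_A| = |\langle SRx,x\rangle_A|$ for every $x$, whence $W_A(TR)$ and $W_A(SR)$ have the same moduli and $w_A(TR)=w_A(SR)$. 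The implication $(ii)\Rightarrow(iii)$ is immediate upon taking $R = x\otimes_A y$, which lies in $\mathbb{B}_{A^{1/2}}(\mathcal{H})$ by Proposition~\ref{L.4}, together with the identity $T(x\otimes_A y) = Tx\otimes_A y$ from Proposition~\ref{L.4}(ii).

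The heart of the matter is $(iii)\Rightarrow(i)$, and this is where I expect the real work to be. The strategy is to exploit Proposition~\ref{P.5}: applying it to both sides of (iii) gives
\begin{align}\label{eq:key}
|{\langle Tx, y\rangle}_{A}| + {\|Tx\|}_{A}{\|y\|}_{A} = |{\langle Sx, y\rangle}_{A}| + {\|Sx\|}_{A}{\|y\|}_{A}
\end{align}
for all nonzero $x,y\in\mathcal{H}$. First I would extract the norm identity ${\|Tx\|}_A = {\|Sx\|}_A$ for all $x$: fix $x$ and let $y$ range; one can either take $y$ with $\langle Tx,y\rangle_A = \langle Sx,y\rangle_A = 0$ (possible unless $Tx$ and $Sx$ together with the geometry force otherwise — a short argument handles the degenerate cases) or, more robustly, divide \eqref{eq:key} by $\|y\|_A$ and let $y$ be chosen so that both inner-product terms are $o(\|y\|_A)$, or simply note that as $y$ runs over a suitable net the inner products stay bounded while scaling $y$ shows the coefficients of $\|y\|_A$ must match. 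Once ${\|Tx\|}_A = {\|Sx\|}_A$ for every $x$, \eqref{eq:key} collapses to
\begin{align}\label{eq:ip}
|{\langle Tx, y\rangle}_{A}| = |{\langle Sx, y\rangle}_{A}| \qquad (x,y\in\mathcal{H}).
\end{align}

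From \eqref{eq:ip} the goal is to show $A^{1/2}Tx$ and $A^{1/2}Sx$ are linearly dependent for each $A$-unit vector $x$, so that Lemma~\ref{L.6} produces a single constant $\lambda$ with $A^{1/2}T = \lambda A^{1/2}S$; the norm identity ${\|Tx\|}_A = {\|Sx\|}_A$ then forces $|\lambda|=1$, i.e. $\lambda\in\mathbb{T}$. The cleanest route is to transport \eqref{eq:ip} into the Hilbert space $\mathbf{R}(A^{1/2})$: since ${\langle Tx,y\rangle}_A = [A^{1/2}Tx, A^{1/2}y]$ under the identification $[\cdot,\cdot]$ of \eqref{I.1}–\eqref{I.2} (using that $A^{1/2}$ has dense range in $\overline{\mathcal R(A^{1/2})}$ so $A^{1/2}y$ ranges over a dense set), \eqref{eq:ip} says $|[u, w]| = |[v, w]|$ for all $w$, where $u = A^{1/2}Tx$ and $v = A^{1/2}Sx$ are fixed vectors of $\mathbf{R}(A^{1/2})$. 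A standard Hilbert-space fact — testing against $w=u$, $w=v$, and $w=u+v$ — yields $|[u,w]|=|[v,w]|$ for all $w$ implies $u = \mu v$ for some unimodular scalar $\mu$ (when both are nonzero; the cases $u=0$ or $v=0$ are trivial via the already-established norm equality). This gives exactly the linear dependence hypothesis of Lemma~\ref{L.6}, and the proof is complete. The main obstacle, and the step deserving the most care, is the passage from \eqref{eq:key} to the two separate identities: one must rule out or separately treat the degenerate configurations of $y$ (e.g. $y\in\mathcal{N}(A)$, or $y$ for which the inner products cannot be made to vanish), and ensure the supremum/limit arguments over $y$ are valid given that $\|\cdot\|_A$ is only a seminorm.
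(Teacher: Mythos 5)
Your skeleton matches the paper's: (i)$\Rightarrow$(ii) by the direct computation, (ii)$\Rightarrow$(iii) via Proposition~\ref{L.4}(ii), and (iii)$\Rightarrow$(i) by feeding Proposition~\ref{P.5} into the identity $|{\langle Tx, y\rangle}_{A}| + {\|Tx\|}_{A}{\|y\|}_{A} = |{\langle Sx, y\rangle}_{A}| + {\|Sx\|}_{A}{\|y\|}_{A}$ and finishing with Lemma~\ref{L.6}. The genuine gap is in the step you yourself flag as delicate: extracting ${\|Tx\|}_{A}={\|Sx\|}_{A}$ from this identity. None of the three mechanisms you sketch works as stated. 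The scaling/limiting idea is vacuous because the identity is homogeneous of degree one in $y$: replacing $y$ by $ty$ multiplies all four terms by $t$, so the inner-product terms do \emph{not} stay bounded relative to ${\|y\|}_{A}$ and no separation of ``coefficients'' results. The orthogonality idea requires a vector $y\notin\mathcal{N}(A)$ with $A^{1/2}y$ orthogonal (or nearly orthogonal) to both $A^{1/2}Tx$ and $A^{1/2}Sx$; such a $y$ need not exist when $\overline{\mathcal{R}(A)}$ has dimension $1$ or $2$ (for instance $A$ a rank-two projection with $A^{1/2}Tx$, $A^{1/2}Sx$ spanning $\mathcal{R}(A^{1/2})$), and even in the nondegenerate case you need a small density argument to produce $y$ with $A^{1/2}y$ in (or near) the orthogonal complement. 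The ``short argument'' promised for the degenerate configurations is exactly the missing content, and without it the implication (iii)$\Rightarrow$(i) is not proved.

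The paper avoids all of this with one clean choice of test vector: fix an $A$-unit $x$ and put $y=Tx$ in the identity; Cauchy--Schwarz gives $2{\|Tx\|}^2_{A} \leq |{\langle Sx, Tx\rangle}_{A}| + {\|Sx\|}_{A}{\|Tx\|}_{A} \leq 2{\|Sx\|}_{A}{\|Tx\|}_{A}$, hence ${\|Tx\|}_{A}\leq{\|Sx\|}_{A}$, and symmetry yields equality, with no hypothesis on $\dim\overline{\mathcal{R}(A)}$. It then takes $y=Sx$ to get $|{\langle Tx, Sx\rangle}_{A}| = {\\|Tx\|}_{A}{\|Sx\|}_{A}$, i.e.\ equality in Cauchy--Schwarz for $A^{1/2}Tx$ and $A^{1/2}Sx$, which is the linear dependence needed for Lemma~\ref{L.6}; your later step (passing to $|{\langle Tx, y\rangle}_{A}|=|{\langle Sx, y\rangle}_{A}|$ for all $y$ and testing against $u$, $v$, $u+v$) is correct but is more than is needed and, in any case, only becomes available after the norm equality you have not established. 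Replacing your limiting/orthogonality argument by the $y=Tx$, $y=Sx$ substitutions repairs the proof and makes it coincide with the paper's.
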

\begin{proof}
(i)$\Rightarrow$(ii) Let $A^{1/2}T = \lambda A^{1/2}S$ for some $\lambda \in \mathbb{T}$.
Then for every $R\in\mathbb{B}_{A^{1/2}}(\mathcal{H})$, we have
\begin{align*}
|{\langle TRx, x\rangle}_{A}| = |\langle A^{1/2}TRx, A^{1/2}x\rangle| = |\langle \lambda A^{1/2}SRx, A^{1/2}x\rangle|
= |{\langle SRx, x\rangle}_{A}|.
\end{align*}
Hence $|{\langle TRx, x\rangle}_{A}| = |{\langle SRx, x\rangle}_{A}|$.
Taking the supremum over $A$-unit vectors $x\in \mathcal{H}$, we deduce that $w_{A}(TR) = w_{A}(SR)$.

(ii)$\Rightarrow$(iii) This implication follows immediately from Proposition \ref{L.4} (ii).

(iii)$\Rightarrow$(i) Let $w_{A}\big(Tx \otimes_{A} y\big) = w_{A}\big(Sx \otimes_{A} y\big)$
for all $x, y \in\mathcal{H}\setminus\{0\}$.
Therefore, by Proposition \ref{P.5}, we have
\begin{align}\label{I.1.T.7}
|{\langle Tx, y\rangle}_{A}| + {\|Tx\|}_{A}{\|y\|}_{A}
= |{\langle Sx, y\rangle}_{A}| + {\|Sx\|}_{A}{\|y\|}_{A}.
\end{align}
Let $x\in \mathcal{H}$ with ${\|x\|}_{A} = 1$ and $y = Tx$.
So, by (\ref{I.1.T.7}) and the Cauchy–-Schwarz inequality, we obtain
\begin{align*}
2{\|Tx\|}^2_{A} \leq |{\langle Sx, Tx\rangle}_{A}| + {\|Sx\|}_{A}{\|Tx\|}_{A} \leq 2{\|Sx\|}_{A}{\|Tx\|}_{A},
\end{align*}
and hence ${\|Tx\|}_{A} \leq {\|Sx\|}_{A}$. By symmetry, therefore,
\begin{align}\label{I.2.T.7}
{\|Tx\|}_{A} = {\|Sx\|}_{A} \qquad (x\in \mathcal{H}, {\|x\|}_{A} = 1).
\end{align}
On the other hand, for any $A$-unit vector $x\in \mathcal{H}$ with $y = Sx$, by (\ref{I.1.T.7}), we get
\begin{align*}
|{\langle Tx, Sx\rangle}_{A}| + {\|Tx\|}_{A}{\|Sx\|}_{A}
= 2{\|Sx\|}^2_{A}.
\end{align*}
Therefore, by (\ref{I.2.T.7}), we infer that
$|{\langle Tx, Sx\rangle}_{A}| = {\|Tx\|}_{A}{\|Sx\|}_{A}$, or equivalently,
$|\langle A^{1/2}Tx, A^{1/2}Sx\rangle| = \|A^{1/2}Tx\|\|A^{1/2}Sx\|$.
Thus $A^{1/2}Tx$ and $A^{1/2}Sx$ are linearly dependent for every $A$-unit vector $x\in \mathcal{H}$.
So, it follows from Lemma \ref{L.6} that there exists $\lambda\in\mathbb{C}\setminus\{0\}$ such that $A^{1/2}T = \lambda A^{1/2}S$.
Finally, by (\ref{I.2.T.7}), we conclude that $|\lambda| = 1$ and the proof is completed.
\end{proof}
As an immediate consequence of Theorem \ref{T.7}, we have the following result.
\begin{corollary}
Let $T, S\in\mathbb{B}_{A^{1/2}}(\mathcal{H})$. Then the following conditions are equivalent:
\begin{itemize}
\item[(i)] $A^{1/2}T = A^{1/2}S$.
\item[(ii)] $W_{A}(TR) = W_{A}(SR)$ for all $R\in\mathbb{B}_{A^{1/2}}(\mathcal{H})$.
\item[(iii)] $W_{A}\big(Tx \otimes_{A} y\big) = W_{A}\big(Sx \otimes_{A} y\big)$ for all $x, y \in\mathcal{H}\setminus\{0\}$.
\end{itemize}
\end{corollary}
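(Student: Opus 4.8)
The plan is to derive this Corollary directly from Theorem~\ref{T.7} by the familiar device of "polarizing" the numerical range: the modulus $w_A$ only sees $|W_A|$, but the full set $W_A$ is recovered once we also know the behaviour under rotations, i.e.\ the equality $W_A(e^{i\theta}TR) = W_A(e^{i\theta}SR)$ for all $\theta$. First I would dispose of (i)$\Rightarrow$(ii): if $A^{1/2}T = A^{1/2}S$ then for any $R\in\mathbb{B}_{A^{1/2}}(\mathcal{H})$ and any $A$-unit vector $x$ we have ${\langle TRx,x\rangle}_A = \langle A^{1/2}TRx, A^{1/2}x\rangle = \langle A^{1/2}SRx, A^{1/2}x\rangle = {\langle SRx,x\rangle}_A$, so the two $A$-numerical ranges coincide as sets. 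The implication (ii)$\Rightarrow$(iii) is immediate from Proposition~\ref{L.4}(ii), exactly as in the theorem, since $TR$ with $R = x\otimes_A y$ equals $Tx\otimes_A y$.

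The substance is (iii)$\Rightarrow$(i). Assume $W_A(Tx\otimes_A y) = W_A(Sx\otimes_A y)$ for all nonzero $x,y$. Taking supremum moduli gives $w_A(Tx\otimes_A y) = w_A(Sx\otimes_A y)$, so Theorem~\ref{T.7} already yields $A^{1/2}T = \lambda A^{1/2}S$ for some $\lambda\in\mathbb{T}$; it remains only to show $\lambda = 1$. For this I would exhibit a single well-chosen pair $x,y$ on which the equality of the full ranges pins down the phase. The natural choice is to pick $x$ with $A^{1/2}Sx \neq 0$ (possible unless $A^{1/2}S = 0$, a trivial case where $A^{1/2}T = 0$ too and there is nothing to prove) and set $y = Sx$. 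Then $Sx\otimes_A Sx$ is $A$-positive, so ${\langle (Sx\otimes_A Sx)z,z\rangle}_A = |{\langle z,Sx\rangle}_A|^2 \ge 0$, which forces $W_A(Sx\otimes_A Sx)\subseteq[0,\infty)$ and in fact, scaling $z$, equal to $[0,{\|Sx\|}_A^2]$. On the other side, $Tx\otimes_A Sx = \lambda\,(Sx\otimes_A Sx)$ at the level of $A^{1/2}$-actions (since $A^{1/2}Tx = \lambda A^{1/2}Sx$), hence $W_A(Tx\otimes_A Sx) = \lambda\cdot[0,{\|Sx\|}_A^2]$. Equality of these two segments in $\mathbb{C}$, together with ${\|Sx\|}_A = {\|Tx\|}_A \neq 0$ (from the theorem's proof, or just from $|\lambda| = 1$), forces $\lambda = 1$.

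The only mildly delicate point is the interplay between the $A$-rank one operator and its action through $A^{1/2}$: one should note that ${\langle (Tx\otimes_A Sx)z,z\rangle}_A = {\langle z,Sx\rangle}_A\,{\langle Tx,z\rangle}_A = {\langle z,Sx\rangle}_A\,\overline{{\langle z,Tx\rangle}_A}$ depends on $x$ only through $A^{1/2}Tx$ and $A^{1/2}Sx$ (write everything as $\langle A^{1/2}\,\cdot\,,A^{1/2}\,\cdot\,\rangle$), so the relation $A^{1/2}Tx = \lambda A^{1/2}Sx$ genuinely transports to ${\langle (Tx\otimes_A Sx)z,z\rangle}_A = \lambda\,|{\langle z,Sx\rangle}_A|^2$, and therefore $W_A(Tx\otimes_A Sx) = \lambda\,W_A(Sx\otimes_A Sx)$. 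I expect this bookkeeping to be the main (small) obstacle; once it is in place the phase identification is one line. Finally, assembling (i)$\Rightarrow$(ii)$\Rightarrow$(iii)$\Rightarrow$(i) closes the cycle and completes the proof.
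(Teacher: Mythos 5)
Your proposal is correct, and its skeleton is the same as the paper's: (i)$\Rightarrow$(ii) by the direct computation ${\langle TRx,x\rangle}_A=\langle A^{1/2}TRx,A^{1/2}x\rangle$, (ii)$\Rightarrow$(iii) via Proposition \ref{L.4}(ii), and (iii)$\Rightarrow$(i) by reducing to Theorem \ref{T.7}. The genuine difference is in the last step. The paper disposes of (iii)$\Rightarrow$(i) with the single sentence that it ``follows from the equivalence (i)$\Leftrightarrow$(iii) of Theorem \ref{T.7}''; read literally, that only yields $A^{1/2}T=\lambda A^{1/2}S$ for some unimodular $\lambda$, and the identification $\lambda=1$ is left implicit. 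You supply exactly that missing step: choosing $x$ with $A^{1/2}Sx\neq 0$ and $y=Sx$, noting ${\langle (Sx\otimes_A Sx)z,z\rangle}_A=|{\langle z,Sx\rangle}_A|^2\ge 0$ while ${\langle (Tx\otimes_A Sx)z,z\rangle}_A=\lambda|{\langle z,Sx\rangle}_A|^2$, so $W_A(Tx\otimes_A Sx)=\lambda\,W_A(Sx\otimes_A Sx)$, and equality of the two sets forces $\lambda=1$ because the right-hand set lies in $[0,\infty)$ and contains the strictly positive value ${\|Sx\|}_A^2$ (attained at the $A$-unit vector $Sx/{\|Sx\|}_A$); your treatment of the degenerate case $A^{1/2}S=0$ is also fine. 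So your write-up is in fact more complete than the paper's at the only point where something nontrivial beyond Theorem \ref{T.7} is needed. One small inaccuracy: the claim that $W_A(Sx\otimes_A Sx)=[0,{\|Sx\|}_A^2]$ ``by scaling $z$'' is not quite right, since $z$ must remain an $A$-unit vector (and in degenerate situations, e.g.\ when $\overline{\mathcal{R}(A)}$ is one-dimensional, the set can be a single point); but your argument never uses the full interval, only positivity and membership of ${\|Sx\|}_A^2>0$, so the conclusion is unaffected.
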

\begin{proof}
(i)$\Rightarrow$(ii) If (i) holds then, for every $R\in\mathbb{B}_{A^{1/2}}(\mathcal{H})$, we have
\begin{align*}
W_{A}(TR) &= \big\{\langle A^{1/2}TRx, A^{1/2}x\rangle: \,\, x\in \mathcal{H},\, {\|x\|}_A = 1\big\}
\\&= \big\{\langle A^{1/2}SRx, A^{1/2}x\rangle: \,\, x\in \mathcal{H},\, {\|x\|}_A = 1\big\} = W_{A}(SR).
\end{align*}

(ii)$\Rightarrow$(iii) This implication is an immediate consequence of Proposition \ref{L.4} (ii)

(iii)$\Rightarrow$(i) It follows from the equivalence (i)$\Leftrightarrow$(iii) of Theorem \ref{T.7}.
\end{proof}

A particular case of Theorem \ref{T.7} can be stated:
\begin{theorem}\label{T.8}
Let $T\in\mathbb{B}_{A^{1/2}}(\mathcal{H})$. Then the following conditions are equivalent:
\begin{itemize}
\item[(i)] $A^{1/2}T = \lambda A^{1/2}$ for some $\lambda \in \mathbb{T}$.
\item[(ii)] $w_{A}(TR) = w_{A}(R)$ for all $R\in\mathbb{B}_{A^{1/2}}(\mathcal{H})$.
\item[(iii)] $w_{A}\big(Tx \otimes_{A} y\big) = w_{A}(x \otimes_{A} y)$ for all $x, y \in\mathcal{H}\setminus\{0\}$.
\end{itemize}
\end{theorem}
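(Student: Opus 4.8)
The plan is to obtain Theorem \ref{T.8} as the special case $S = I$ of Theorem \ref{T.7}, after checking that the identity operator $I$ indeed lies in $\mathbb{B}_{A^{1/2}}(\mathcal{H})$ (which is clear, since $\|Ix\|_A = \|x\|_A$ so $I$ admits an $A^{1/2}$-adjoint with constant $c = 1$). Once $I \in \mathbb{B}_{A^{1/2}}(\mathcal{H})$ is noted, conditions (ii) and (iii) of Theorem \ref{T.8} are literally conditions (ii) and (iii) of Theorem \ref{T.7} with $S$ replaced by $I$, because $w_A(IR) = w_A(R)$ and, by Proposition \ref{L.4}(ii), $Ix \otimes_A y = x \otimes_A y$. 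Likewise condition (i) of Theorem \ref{T.8} is condition (i) of Theorem \ref{T.7} with $A^{1/2}S = A^{1/2}I = A^{1/2}$. Hence the three equivalences transfer verbatim.

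Concretely, I would write: for (i)$\Rightarrow$(ii) apply the implication (i)$\Rightarrow$(ii) of Theorem \ref{T.7} with $S = I$; for (ii)$\Rightarrow$(iii) invoke Proposition \ref{L.4}(ii) exactly as in the proof of Theorem \ref{T.7} (note $(Ix)\otimes_A y = x \otimes_A y$); and for (iii)$\Rightarrow$(i) apply the implication (iii)$\Rightarrow$(i) of Theorem \ref{T.7} with $S = I$, observing that $Sx \otimes_A y = x \otimes_A y$ so that hypothesis (iii) here is precisely hypothesis (iii) there. No new computation is required.

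The one point that genuinely needs a word of justification — and the only place a careless reader might stumble — is the verification that $I \in \mathbb{B}_{A^{1/2}}(\mathcal{H})$, so that $S = I$ is an admissible substitution into Theorem \ref{T.7}; this is immediate from the characterization $\mathbb{B}_{A^{1/2}}(\mathcal{H}) = \{T \in \mathbb{B}(\mathcal{H}) : \exists c > 0,\ \|Tx\|_A \le c\|x\|_A\ \forall x\}$ recalled in the introduction. I expect no real obstacle; the entire content of Theorem \ref{T.8} is bookkeeping on top of Theorem \ref{T.7}. A short proof reading simply "Apply Theorem \ref{T.7} with $S = I$ (which belongs to $\mathbb{B}_{A^{1/2}}(\mathcal{H})$), noting that $w_A(IR) = w_A(R)$ and $Ix \otimes_A y = x \otimes_A y$ by Proposition \ref{L.4}(ii)." would suffice.
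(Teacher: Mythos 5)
Your proposal is correct and coincides with the paper's intent: Theorem \ref{T.8} is presented there precisely as the particular case $S = I$ of Theorem \ref{T.7}, with no separate proof given. Your added remark that $I \in \mathbb{B}_{A^{1/2}}(\mathcal{H})$ (clear from $\|Ix\|_A = \|x\|_A$) is the only verification needed, and it is sound.
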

On making use of Theorem \ref{T.8}, we reach the next result.
\begin{corollary}
Let $T\in\mathbb{B}_{A^{1/2}}(\mathcal{H})$. Then the following conditions are equivalent:
\begin{itemize}
\item[(i)] $A^{1/2}T = A^{1/2}$.
\item[(ii)] $W_{A}(TR) = W_{A}(R)$ for all $R\in\mathbb{B}_{A^{1/2}}(\mathcal{H})$.
\item[(iii)] $W_{A}\big(Tx \otimes_{A} y\big) = W_{A}(x \otimes_{A} y)$ for all $x, y \in\mathcal{H}\setminus\{0\}$.
\end{itemize}
\end{corollary}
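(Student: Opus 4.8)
The plan is to deduce this corollary from Theorem \ref{T.8} in exactly the same way the previous corollary was deduced from Theorem \ref{T.7}, so the argument is almost entirely a matter of recording the easy implications and invoking the right special case. The scheme is (i)$\Rightarrow$(ii)$\Rightarrow$(iii)$\Rightarrow$(i).

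For (i)$\Rightarrow$(ii): assume $A^{1/2}T = A^{1/2}$. Then for every $R\in\mathbb{B}_{A^{1/2}}(\mathcal{H})$ and every $A$-unit vector $x$, one computes ${\langle TRx, x\rangle}_A = \langle A^{1/2}TRx, A^{1/2}x\rangle = \langle A^{1/2}Rx, A^{1/2}x\rangle = {\langle Rx, x\rangle}_A$, not merely up to modulus but on the nose. Taking the set of such values as $x$ ranges over $A$-unit vectors gives $W_A(TR) = W_A(R)$ verbatim. For (ii)$\Rightarrow$(iii): this is the immediate specialization $R = x\otimes_A y$, using Proposition \ref{L.4}(ii) to rewrite $(x\otimes_A y)$ composed appropriately — more precisely $T(x\otimes_A y) = Tx\otimes_A y$ — so that $W_A(T(x\otimes_A y)) = W_A(Tx\otimes_A y)$ and similarly $W_A(x\otimes_A y)$ on the other side. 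For (iii)$\Rightarrow$(i): here I would not re-prove anything but instead observe that (iii) says $W_A(Tx\otimes_A y) = W_A(x\otimes_A y)$ for all nonzero $x,y$; since $W_A(\cdot)$ determines $w_A(\cdot)$ as its supremum modulus, condition (iii) of Theorem \ref{T.8} holds, hence by that theorem $A^{1/2}T = \lambda A^{1/2}$ for some $\lambda\in\mathbb{T}$. It then remains to upgrade the unimodular scalar $\lambda$ to $1$: equality of the full $A$-numerical ranges (not just their radii) forces $\lambda = 1$, because $W_A(x\otimes_A y)$ contains points with nonzero real or imaginary parts (for suitable $x,y$ with ${\langle x,y\rangle}_A \neq 0$, e.g. $y = x$ with ${\|x\|}_A = 1$, where $1 \in W_A(x\otimes_A x)$), and $W_A(Tx\otimes_A y) = \lambda W_A(x\otimes_A y)$ cannot coincide with $W_A(x\otimes_A y)$ unless $\lambda = 1$. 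Alternatively, and more cleanly, I would appeal directly to the equivalence (i)$\Leftrightarrow$(iii) of the \emph{previous} corollary with $S$ taken to be the identity operator $I$ (noting $A^{1/2}I = A^{1/2}$ and $Ix\otimes_A y = x\otimes_A y$), which packages the scalar-fixing argument already done there.

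The only genuine subtlety — and it is minor — is the passage from Theorem \ref{T.8}'s conclusion $A^{1/2}T = \lambda A^{1/2}$ to the sharp $\lambda = 1$; this is precisely where working with $W_A$ rather than $w_A$ pays off, and it is handled either by the explicit point $1 \in W_A(x\otimes_A x)$ computation or, as above, by citing the preceding corollary verbatim. Everything else is bookkeeping identical to the $T, S$ case.

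\begin{proof}
(i)$\Rightarrow$(ii) Suppose $A^{1/2}T = A^{1/2}$. Then, for every $R\in\mathbb{B}_{A^{1/2}}(\mathcal{H})$, we have
\begin{align*}
W_{A}(TR) &= \big\{\langle A^{1/2}TRx, A^{1/2}x\rangle: \,\, x\in \mathcal{H},\, {\|x\|}_A = 1\big\}
\\&= \big\{\langle A^{1/2}Rx, A^{1/2}x\rangle: \,\, x\in \mathcal{H},\, {\|x\|}_A = 1\big\} = W_{A}(R).
\end{align*}

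(ii)$\Rightarrow$(iii) Apply (ii) with $R = x \otimes_{A} y$ and use Proposition \ref{L.4}(ii).

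(iii)$\Rightarrow$(i) Condition (iii) in particular implies, by passing to supremum moduli, that
$w_{A}\big(Tx \otimes_{A} y\big) = w_{A}(x \otimes_{A} y)$ for all $x, y \in\mathcal{H}\setminus\{0\}$,
so by Theorem \ref{T.8} there exists $\lambda \in \mathbb{T}$ with $A^{1/2}T = \lambda A^{1/2}$.
Then, for every $x, y \in\mathcal{H}\setminus\{0\}$ and every $A$-unit vector $z\in\mathcal{H}$,
\begin{align*}
{\langle (Tx \otimes_{A} y)z, z\rangle}_{A}
= {\langle z, y\rangle}_{A}\langle A^{1/2}Tx, A^{1/2}z\rangle
= \lambda\,{\langle z, y\rangle}_{A}\langle A^{1/2}x, A^{1/2}z\rangle
= \lambda\,{\langle (x \otimes_{A} y)z, z\rangle}_{A},
\end{align*}
whence $W_{A}\big(Tx \otimes_{A} y\big) = \lambda\,W_{A}(x \otimes_{A} y)$.
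Choosing an $A$-unit vector $x\in\mathcal{H}$ and taking $y = x$, we have $1 = {\langle x, x\rangle}_{A} \in W_{A}(x \otimes_{A} x)$,
and by (iii) this point lies in $W_{A}\big(Tx \otimes_{A} x\big) = \lambda\,W_{A}(x \otimes_{A} x)$.
Since $W_{A}(x \otimes_{A} x) \subseteq \{t \in \mathbb{C}: \,\, 0 \leq t \leq 1\}$ by Proposition \ref{P.5} and the fact that
${\langle (x \otimes_{A} x)z, z\rangle}_{A} = |{\langle z, x\rangle}_{A}|^2 \geq 0$, it follows that $\lambda \cdot 1 \in [0, 1]$,
which forces $\lambda = 1$. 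Therefore $A^{1/2}T = A^{1/2}$.
\end{proof}
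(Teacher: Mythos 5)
Your proof is correct and follows the route the paper intends: the paper states this corollary without proof as an immediate consequence of Theorem \ref{T.8} (mirroring how the previous corollary was deduced from Theorem \ref{T.7}), and your (i)$\Rightarrow$(ii) and (ii)$\Rightarrow$(iii) steps coincide with that pattern. Your only addition is that you spell out explicitly how the unimodular scalar $\lambda$ from Theorem \ref{T.8} is forced to equal $1$ by the equality of the full $A$-numerical ranges (via $1\in W_A(x\otimes_A x)\subseteq[0,1]$), a detail the paper leaves implicit; this is a welcome clarification rather than a deviation.
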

\begin{remark}
By Proposition \ref{L.4}(ii) and an argument similar to that given in the proof of Theorem \ref{T.7},
for $T, S\in\mathbb{B}_{A^{1/2}}(\mathcal{H})$ it can be seen that the following conditions are equivalent:
\begin{itemize}
\item[(i)] $A^{1/2}T^{\sharp_A} = \lambda A^{1/2}S^{\sharp_A}$ for some $\lambda \in \mathbb{T}$.
\item[(ii)] $w_{A}(RT) = w_{A}(RS)$ for all $R\in\mathbb{B}_{A^{1/2}}(\mathcal{H})$.
\item[(iii)] $w_{A}\big(x \otimes_{A} T^{\sharp_A}y\big) = w_{A}\big(x \otimes_{A} S^{\sharp_A}y\big)$ for all $x, y \in\mathcal{H}\setminus\{0\}$.
\end{itemize}
\end{remark}
\textbf{Acknowledgement.}
The author would like to thank the referees for their valuable comments, which helped to improve the exposition.
\bibliographystyle{amsplain}

\begin{thebibliography}{99}

\bibitem{Ar.Co.Go} M.L. Arias, G. Corach and M.C. Gonzalez,
\textit{Lifting properties in operator ranges},
Acta Sci. Math. (Szeged) \textbf{75}(3-4) (2009), 635--653.

\bibitem{Ba.Ka.Ah} H. Baklouti, K. Feki and O. A. M. Sid Ahmed,
\textit{Joint numerical ranges of operators in semi-Hilbertian spaces},
Linear Algebra Appl. \textbf{555} (2018), 266--284.

\bibitem{Br.Ro} L. de Branges and J. Rovnyak,
\textit{Square Summable Power Series, Holt, Rinehert and Winston},
New York, 1966.

\bibitem{Ch.Ka.Ps} Ch. Chorianopoulos, S. Karanasios and P. Psarrakos,
\textit{A definition of numerical range of rectangular matrices},
Linear Multilinear Algebra \textbf{57}(5) (2009), 459--475.

\bibitem{Co.Gh} P. Cojuhari and A. Gheondea,
\textit{On lifting of operators to Hilbert spaces induced by positive selfadjoint operators},
J. Math. Anal. Appl. \textbf{304} (2005), 584--598.

\bibitem{Co.Ma.St} G. Corach, A. L. Maestripieri and D. Stojanoff,
\textit{Projections in operator ranges},
Proc. Amer. Math. Soc. \textbf{134} (2006), 765--778.

\bibitem{Fo.Go} G. Fongi and M.C. Gonzalez,
\textit{Partial isometries and pseudoinverses in semi-Hilbertian spaces},
Linear Algebra Appl. \textbf{495} (2016), 324--343.

\bibitem{K.S} S. Karami and A. Salemi,
\textit{Polynomial numerical hulls of the direct sum of two Jordan blocks},
Linear Algebra Appl. \textbf{585} (2020), 209--226.

\bibitem{Wi.Se.Su} W. Majdak, N. A. Secelean and L. Suciu,
\textit{Ergodic properties of operators in some semi-Hilbertian spaces},
Linear Multilinear Algebra \textbf{61}(2) (2013), 139--159.

\bibitem{M.X.Z} M. S. Moslehian, Q. Xu and A. Zamani,
\textit{Seminorm and numerical radius inequalities of operators in semi-Hilbertian spaces},
Linear Algebra Appl. \textbf{591} (2020), 299--321.

\bibitem{Ps} P. Psarrakos,
\textit{Numerical range of linear pencils},
Linear Algebra Appl. \textbf{317} (2000), 127--141.

\bibitem{Su} L. Suciu,
\textit{Maximum subspaces related to $A$-contractions and quasinormal operators},
J. Korean Math. Soc. \textbf{45}(1) (2008), 205--219.

\bibitem{Z} A. Zamani,
\textit{$A$-numerical radius inequalities for semi-Hilbertian space operators},
Linear Algebra Appl. \textbf{578} (2019), 159--183.

\bibitem{Zh.Ho.He} X. Zhanga, J. Houab and K. He,
\textit{Maps preserving numerical radius and cross norms of operator products},
Linear Multilinear Algebra \textbf{57}(1) (2009), 523--534.

\end{thebibliography}

\end{document}